\author{Clark Barwick and Saul Glasman}
\title{A note on stable recollements}
\newcommand{\D}{\Delta}
\DeclareMathOperator{\Fun}{Fun}
\newcommand{\id}{\text{id}}
\newcommand{\inc}{\subseteq}
\newcommand{\iy}{\infty}
\newcommand{\mb}{\mathbf}
\newcommand{\mc}{\mathcal}
\newcommand{\mf}{\mathfrak}
\newcommand{\Map}{\text{Map}}
\newcommand{\os}{\overset}
\newcommand{\X}{\times}
\theoremstyle{definition}
\newtheorem{rem}[section]{Remark}
\newtheorem{war}[section]{Warning}
\theoremstyle{plain}
\newtheorem{cor}[section]{Corollary}
\newtheorem{lem}[section]{Lemma}
\newtheorem{prop}[section]{Proposition}
\begin{document}

\begin{abstract}
In this short \'etude, we observe that the full structure of a recollement on a stable $\iy$-category can be reconstructed from minimal data: that of a reflective and coreflective full subcategory. The situation has more symmetry than one would expect at a glance. We end with a practical lemma on gluing equivalences along a recollement.
\end{abstract}

\maketitle

Let $\mb{X}$ be a stable $\iy$-category and let $\mb{U}$ be a full subcategory of $\mb{X}$ that is stable under equivalences and is both reflective and coreflective -- that is, its inclusion admits both a left and a right adjoint. We'll denote the inclusion functor $\mb{U}\subseteq\mb{X}$ by $j_{\ast}$ and its two adjoints by $j^{\ast}$ and $j^{\times}$, so that we have a chain of adjunctions
\[ j^{\ast} \dashv j_{\ast} \dashv j^{\times}. \]

 Let $\mb{Z}^{\wedge}\subseteq\mb{X}$ denote the right orthogonal complement of $\mb{U}$ -- that is, the full subcategory of $\mb{X}$ spanned by those objects $M$ such that $\Map_{\mb{X}}(N, M) = \ast$ for every $N \in \mb{U}$. Dually, let $\mb{Z}^{\vee}\subseteq\mb{X}$ denote the left orthogonal complement of $\mb{U}$ --  that is, the full subcategory of $\mb{X}$ spanned by those objects $M$ such that $\Map_{\mb{X}}(M, N) = \ast$ for every $N \in \mb{U}$. The inclusions of $\mb{Z}^{\wedge}\subseteq\mb{X}$ and $\mb{Z}^{\vee}\subseteq\mb{X}$ will be denoted $i_{\wedge}$ and $i_{\vee}$ respectively.

\begin{war} Our notation is chosen to evoke a geometric idea, but the role of open and closed is reversed from recollements that arise in the theory of constructible sheaves.

In our thinking, we imagine $\mb{X}$ as the $\infty$-category $\mb{D}_{\textit{qcoh}}(X)$ of quasicoherent complexes over a suitably nice scheme $X$, which is decomposed as an open subscheme $U$ together with a closed complement $Z$. In this analogy, we think of $\mb{U}$ as the $\infty$-category of quasicoherent modules on $U$, embedded via the (derived) pushforward. The subcategory $\mb{Z}^{\vee}$ is then the $\infty$-category of quasicoherent complexes on $X$ that are set-theoretically supported on $Z$, and the subcategory $\mb{Z}^{\wedge}$ is the $\infty$-category of quasicoherent complexes on $X$ that are complete along $Z$.
\end{war}

\begin{lem}\label{lem:adjs}
	In this situation, $\mb{Z}^{\wedge}$ is reflective and $\mb{Z}^{\vee}$ is coreflective.
\end{lem}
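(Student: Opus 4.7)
The plan is to build the reflector onto $\mb{Z}^{\wedge}$ as the cofiber of the counit $j_{\ast}j^{\times}\to\id_{\mb{X}}$, and dually the coreflector onto $\mb{Z}^{\vee}$ as the fiber of the unit $\id_{\mb{X}}\to j_{\ast}j^{\ast}$; the two halves are strictly dual, so I will spell out the first and only sketch the second. The essential input from the adjunction chain is that $j_{\ast}$ is fully faithful, so both composites $j^{\ast}j_{\ast}$ and $j^{\times}j_{\ast}$ are canonically equivalent to $\id_{\mb{U}}$.

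For the reflection onto $\mb{Z}^{\wedge}$, I would send $M\in\mb{X}$ to the cofiber $LM$ in the cofiber sequence
\[ j_{\ast}j^{\times}M \to M \to LM. \]
First I would check $LM\in\mb{Z}^{\wedge}$: for $N\in\mb{U}$, stability of $\mb{X}$ lets me apply $\Map_{\mb{X}}(j_{\ast}N,-)$ to obtain a fiber sequence of mapping spectra. The left-hand map $\Map_{\mb{X}}(j_{\ast}N,j_{\ast}j^{\times}M)\to\Map_{\mb{X}}(j_{\ast}N,M)$ is identified, via full faithfulness of $j_{\ast}$ on the source and the adjunction $j_{\ast}\dashv j^{\times}$ on the target, with the identity of $\Map_{\mb{U}}(N,j^{\times}M)$, and is therefore an equivalence. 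Hence $\Map_{\mb{X}}(j_{\ast}N,LM)\simeq 0$. Second, to see $M\to LM$ is universal, I would apply $\Map_{\mb{X}}(-,N)$ for $N\in\mb{Z}^{\wedge}$: the contribution $\Map_{\mb{X}}(j_{\ast}j^{\times}M,N)$ vanishes by the very definition of $\mb{Z}^{\wedge}$, so $\Map_{\mb{X}}(LM,N)\toe\Map_{\mb{X}}(M,N)$. This objectwise universal property assembles into the required left adjoint to $i_{\wedge}$.

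The argument for $\mb{Z}^{\vee}$ is the formal dual: take the fiber sequence $RM\to M\to j_{\ast}j^{\ast}M$ coming from the unit of $j^{\ast}\dashv j_{\ast}$, swap $\Map_{\mb{X}}(j_{\ast}N,-)$ with $\Map_{\mb{X}}(-,j_{\ast}N)$ and $\Map_{\mb{X}}(-,N)$ with $\Map_{\mb{X}}(N,-)$ throughout, and use that $j^{\ast}j_{\ast}\simeq\id$. There is no serious obstacle — everything is forced by the stable structure on $\mb{X}$ together with the existing adjunctions — but one has to keep a close eye on which direction each (co)fiber and each (co)unit runs, since transposing any of them would place the candidate adjoint on the wrong side of the orthogonality condition.
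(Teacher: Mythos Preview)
Your proof is correct and follows essentially the same route as the paper: define the reflector as the cofiber of the counit $j_{\ast}j^{\times}\to\id_{\mb{X}}$, observe it lands in $\mb{Z}^{\wedge}$, and verify the universal property via the induced (co)fiber sequence on mapping spectra; the coreflective case is dual. The paper is slightly terser (it simply asserts $\kappa(\mb{X})\subseteq\mb{Z}^{\wedge}$ where you spell out the verification), but the argument is the same.
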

\begin{proof}
	Denote by $\kappa$ the cofiber of the counit $j_{\ast}j^{\times} \to \id_{\mb{X}}$. Then $\kappa(\mb{X}) \inc \mb{Z}^{\wedge}$, so we factor
	\[ \kappa = i_{\wedge} i^{\wedge} \]
	with $i^{\wedge}\in\Fun(\mb{X},\mb{Z}^{\wedge})$. We claim that $i^{\wedge}$ is left adjoint to $i_{\wedge}$. Indeed, for any $M \in \mb{X}$ and $N \in \mb{Z}^{\wedge}$, we have a cofiber sequence of spectra
	\[ F_{\mb{Z}^{\wedge}}(i^{\wedge} M, N) \simeq F_{\mb{X}}(i_{\wedge} i^{\wedge} M, i_{\wedge} N) \to F_{\mb{X}}(M, i_{\wedge} N) \to F_{\mb{X}}(j_{\ast}j^{\times}M, i_{\wedge} N )\simeq 0. \]

	The proof that $\mb{Z}^{\vee}$ is coreflective is dual, and we'll denote the right adjoint of $i_{\vee}$ by $i^{\vee}$.
\end{proof}
\begin{lem}
	In the sense of \cite[Df. 3.4]{Gla15a}, 
	\[
	\mf{S}(\{0\}) = \mb{Z}^{\wedge},\ \mf{S}(\{1\}) = \mb{U},\ \mf{S}(\D^{1}) = \mb{X},\ \mf{S}(\emptyset) = 0
	\]
	is a stratification of $\mb{X}$ along $\D^1$.
\end{lem}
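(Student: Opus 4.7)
The plan is to unwind \cite[Df.~3.4]{Gla15a} in the case $P = \D^{1}$ and verify each clause. A stratification of $\mb{X}$ along $\D^{1}$ assigns to each locally closed subset $S \inc \D^{1}$ a stable full subcategory $\mf{S}(S) \inc \mb{X}$, pins down the trivial boundary cases $\mf{S}(\emp) = 0$ and $\mf{S}(\D^{1}) = \mb{X}$, and requires that each complementary open/closed decomposition furnishes a recollement. Since $\D^{1}$ admits the unique nontrivial decomposition $\D^{1} = \{1\} \sqcup \{0\}$, the content collapses to two tasks: confirming that $\mb{U}$ and $\mb{Z}^{\wedge}$ are stable full subcategories of $\mb{X}$, and exhibiting the pair $(\mb{U}, \mb{Z}^{\wedge})$ as a recollement of $\mb{X}$.

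First I would dispose of stability. Since $j_{\ast}$ has both adjoints, $\mb{U}$ inherits all finite limits and colimits from $\mb{X}$ and is therefore stable. For $\mb{Z}^{\wedge}$, closure under finite limits is forced by its defining right-orthogonality condition in the stable category $\mb{X}$, and closure under finite colimits follows from the existence of the left adjoint $i^{\wedge}$ of Lemma~\ref{lem:adjs}, which is necessarily exact.

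Next I would verify the recollement. The fully faithful inclusions $j_{\ast}$ and $i_{\wedge}$, with right adjoint $j^{\times}$ and left adjoint $i^{\wedge}$ respectively, are already in hand. The gluing cofiber sequence
\[ j_{\ast}j^{\times} \to \id_{\mb{X}} \to i_{\wedge}i^{\wedge} \]
is precisely what was constructed in the proof of Lemma~\ref{lem:adjs}. The remaining orthogonality $j^{\times}i_{\wedge} \simeq 0$ is immediate: for $M \in \mb{Z}^{\wedge}$ and any $N \in \mb{U}$, one has $\Map_{\mb{X}}(j_{\ast}N, i_{\wedge}M) \simeq \ast$ by the definition of $\mb{Z}^{\wedge}$, and adjunction then gives $\Map_{\mb{U}}(N, j^{\times}i_{\wedge}M) \simeq \ast$ for every $N \in \mb{U}$, forcing $j^{\times}i_{\wedge}M \simeq 0$.

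The main obstacle is essentially cosmetic: matching this data to whichever precise packaging Glasman uses in Df.~3.4 -- be it a functor from the poset of subsets of $\D^{1}$ to $\iycat$ with suitable compatibilities, an iterated recollement datum, or a stratified localisation. In any such formulation the $\D^{1}$-level content is exactly one recollement along a complementary open/closed pair, which the previous paragraphs furnish; no higher coherences intervene because $\D^{1}$ offers only one such pair.
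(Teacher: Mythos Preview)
Your proposal has a genuine gap that stems from guessing at Glasman's definition rather than unwinding it. As the paper's proof reveals, \cite[Df.~3.4]{Gla15a} is \emph{not} packaged as ``each complementary pair furnishes a recollement''; rather, in the $\D^1$ case it reduces to two concrete conditions on the localization functors:
\begin{enumerate}
\item $i^{\wedge} j_{\ast} j^{\ast} = 0$, and
\item the fracture square
\[
\begin{tikzcd}
\id \ar{r}\ar{d} & i_{\wedge} i^{\wedge} \ar{d} \\
j_{\ast} j^{\ast} \ar{r} & j_{\ast} j^{\ast} i_{\wedge} i^{\wedge}
\end{tikzcd}
\]
is cartesian.
\end{enumerate}
You verify neither. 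Instead you check stability, the cofiber sequence $j_{\ast}j^{\times}\to\id\to i_{\wedge}i^{\wedge}$, and the orthogonality $j^{\times}i_{\wedge}\simeq 0$ --- which is essentially the content of the \emph{next} lemma in the paper (the recollement in the sense of \cite[Df.~A.8.1]{HA}), not this one. The paper in fact separates these two statements and only afterwards remarks that they are equivalent.

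Your final paragraph dismisses the discrepancy as ``essentially cosmetic,'' but it is not: the cartesianness of the fracture square is the actual computation to be done here, and it does not follow formally from the axioms you have written down without further argument (namely, comparing horizontal fibers and invoking $j^{\ast}j_{\ast}\simeq\id$, which is exactly the paper's step). Your cofiber sequence gives the fiber of the top row as $j_{\ast}j^{\times}$, but you never compute the fiber of the bottom row or compare them. So while you have assembled closely related ingredients, the specific claim of this lemma remains unproved in your write-up.
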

\begin{proof}
	After unravelling the notation, one sees that this amounts to the following two claims.
	\begin{itemize}
		\item First, $i^{\wedge} j_{\ast} j^{\ast} = 0$. This point is obvious.
		\item The usual fracture square
		\[\begin{tikzcd} \id \ar{r} \ar{d} & i_{\wedge} i^{\wedge} \ar{d} \\
		j_{\ast} j^{\ast} \ar{r} & j_{\ast} j^{\ast} i_{\wedge} i^{\wedge} \end{tikzcd} \]
		is cartesian. To see this, take fibers of the horizontal maps to get the map
		\[ j_{\ast} j^{\times} \to j_{\ast} j^{\ast} j_{\ast} j^{\times}, \]
		which is an equivalence since $j^{\ast} j_{\ast}$ is homotopic to the identity.\qedhere
	\end{itemize}
\end{proof}
\begin{rem}
	Conversely, if $\mf{S}$ is a stratification of $\mb{X}$ along $\D^1$, then $\mf{S}(\{0\})$ is coreflective as well as reflective. Indeed, the fracture square together with the argument of Lm. \ref{lem:adjs} shows that the fiber of $\id \to \mc{L}_1$ defines a right adjoint to the inclusion of $\mf{S}(\{0\})$.
\end{rem}
\begin{lem}
	In the sense of \cite[Df. A.8.1]{HA}, $\mb{X}$ is a recollement of $\mb{U}$ and $\mb{Z}^{\wedge}$.
\end{lem}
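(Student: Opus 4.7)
The plan is to verify the clauses of Lurie's definition of a recollement (\cite[Df. A.8.1]{HA}) for the pair $(\mb{U}, \mb{Z}^{\wedge})$, using the left adjoints $j^{\ast}$ to $j_{\ast}$ supplied by hypothesis and $i^{\wedge}$ to $i_{\wedge}$ supplied by Lm.~\ref{lem:adjs}. The substantive checks are: (a) left exactness of $j^{\ast}$ and $i^{\wedge}$; (b) the vanishing $i^{\wedge} j_{\ast} \simeq 0$; and (c) joint conservativity of $(j^{\ast}, i^{\wedge}) \colon \mb{X} \to \mb{U} \X \mb{Z}^{\wedge}$.

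For (a), $\mb{U}$ is stable because it is both reflective and coreflective, so $j_{\ast}$ preserves finite limits and colimits, and then $j^{\ast}$, as a left adjoint between stable $\iy$-categories, is automatically exact. For $\mb{Z}^{\wedge}$, the cofiber sequence
\[ j_{\ast} j^{\times} \to \id_{\mb{X}} \to i_{\wedge} i^{\wedge} \]
from the proof of Lm.~\ref{lem:adjs} exhibits the reflector onto $\mb{Z}^{\wedge}$ as a cofiber of exact endofunctors of $\mb{X}$, hence itself exact; since $i_{\wedge}$ is fully faithful, it follows that $\mb{Z}^{\wedge}$ is stable and $i^{\wedge}$ is left exact.

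For (b), adjunction and the right-orthogonality defining $\mb{Z}^{\wedge}$ give, for every $M \in \mb{U}$ and $N \in \mb{Z}^{\wedge}$,
\[ \Map_{\mb{Z}^{\wedge}}(i^{\wedge} j_{\ast} M, N) \simeq \Map_{\mb{X}}(j_{\ast} M, i_{\wedge} N) = \ast, \]
so $i^{\wedge} j_{\ast} M$ is initial in $\mb{Z}^{\wedge}$, i.e., zero. For (c), since we are in the stable setting it suffices to show that $(j^{\ast}, i^{\wedge})$ detects zero objects. If $M \in \mb{X}$ satisfies $j^{\ast} M \simeq 0$ and $i^{\wedge} M \simeq 0$, the cartesian fracture square from the previous lemma degenerates to a pullback of $0 \to 0 \leftarrow 0$, forcing $M \simeq 0$.

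The only point requiring any care is clause (a), because a reflective subcategory of a stable $\iy$-category need not be stable in general; the cofiber-sequence presentation of $i_{\wedge} i^{\wedge}$ settles this cleanly. Everything else is a direct read from the orthogonality definition of $\mb{Z}^{\wedge}$ and the fracture square already established.
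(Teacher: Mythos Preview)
Your proof is correct and follows essentially the same route as the paper: the paper declares all the recollement axioms obvious except joint conservativity, which it deduces from the fracture square exactly as you do in (c). You have simply spelled out the ``obvious'' parts (a) and (b), giving in particular a clean argument for the stability of $\mb{Z}^{\wedge}$ via the cofiber presentation of $i_{\wedge}i^{\wedge}$.
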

\begin{proof}
	The only claim that isn't obvious is point e): that $j^{\ast}$ and $i^{\wedge}$ are jointly conservative. But since they are exact functors of stable $\iy$-categories, this is equivalent to the claim that if $j^{\ast}M$ and $i^{\wedge}M$ are both zero, then $M$ is zero, and this is clear from the fracture square.
\end{proof}
\begin{rem}
	Again there's a converse; indeed, if a stable $\infty$-category $\mb{X}$ is a recollement of $\mb{U}$ and $\mb{Z}$, then $\mb{U}$ is coreflective \cite[Rk. A.8.5]{HA}. We thus conclude that the following three pieces of data are essentially equivalent:
	\begin{itemize}
	\item reflective and coreflective subcategories of $\mb{X}$,
	\item stratifications $\mf{S}$ along $\Delta^1$ in the sense of \cite[Df. 3.4]{Gla15a} with $\mf{S}(\Delta^1)=\mb{X}$, and
	\item recollements of $\mb{X}$ in the sense of \cite[Df. A.8.1]{HA}.
	\end{itemize}
\end{rem}

As we have described this structure, there's a surprising intrinsic symmetry that traditional depictions of recollements don't really bring out:
\begin{prop}
	The functors $i^{\wedge} i_{\vee}$ and $i^{\vee} i_{\wedge}$ define inverse equivalences of categories between $\mb{Z}^{\wedge}$ and $\mb{Z}^{\vee}$.
\end{prop}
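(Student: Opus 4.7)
The plan is to construct natural equivalences $(i^{\wedge} i_{\vee}) \circ (i^{\vee} i_{\wedge}) \simeq \id_{\mathbf{Z}^{\wedge}}$ and $(i^{\vee} i_{\wedge}) \circ (i^{\wedge} i_{\vee}) \simeq \id_{\mathbf{Z}^{\vee}}$. These two statements are formally dual under the involution exchanging the decorations $\wedge \leftrightarrow \vee$ together with the adjoints $j^{\ast} \leftrightarrow j^{\times}$, so I only need to produce the first; the second will follow by the same argument applied to $\mathbf{X}^{\op}$.

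The key inputs will be three. From Lemma \ref{lem:adjs} and its dual I have the presentations of $i_{\wedge} i^{\wedge}$ as the cofiber of the counit $j_{\ast} j^{\times} \to \id$ and of $i_{\vee} i^{\vee}$ as the fiber of the unit $\id \to j_{\ast} j^{\ast}$. From the mapping-space definitions of the orthogonal complements, combined with the relevant adjunctions, I get the vanishings $j^{\times} i_{\wedge} = 0$ and $j^{\ast} i_{\vee} = 0$. Finally, I will invoke the fully-faithful identities $i^{\wedge} i_{\wedge} = \id$ and $j^{\times} j_{\ast} = \id$ at the end.

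The heart of the argument will be a chase starting from $N \in \mathbf{Z}^{\wedge}$, for which I set $M := i_{\wedge} N$. First I would apply $j^{\times}$ to the fiber sequence $i_{\vee} i^{\vee} M \to M \to j_{\ast} j^{\ast} M$; the vanishing $j^{\times} M = j^{\times} i_{\wedge} N = 0$ and the identity $j^{\times} j_{\ast} = \id$ collapse this to an identification $j_{\ast} j^{\times} i_{\vee} i^{\vee} M \simeq (j_{\ast} j^{\ast} M)[-1]$. Then I would feed this into the defining cofiber sequence for $i_{\wedge} i^{\wedge}$ at the object $i_{\vee} i^{\vee} M$, so that $i_{\wedge} i^{\wedge} i_{\vee} i^{\vee} M$ is presented as the cofiber of a map $(j_{\ast} j^{\ast} M)[-1] \to i_{\vee} i^{\vee} M$; a rotation of the original fiber sequence then identifies this cofiber with $M$ itself. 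A final application of $i^{\wedge}$, using $i^{\wedge} i_{\wedge} = \id$, should yield $i^{\wedge} i_{\vee} i^{\vee} i_{\wedge} N \simeq N$.

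The main obstacle I anticipate is not ideas but coherence: one must verify that the map $(j_{\ast} j^{\ast} M)[-1] \to i_{\vee} i^{\vee} M$ produced by $j_{\ast}$ applied to the counit at $i_{\vee} i^{\vee} M$ agrees, under the identification of its source, with the boundary map of the rotated fiber sequence, and that this agreement is natural in $N$. Once that compatibility is in place, the two cofiber identifications align naturally, and the dual argument supplies the other composite equivalence.
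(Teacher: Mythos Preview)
Your plan is correct, and the coherence obstacle you flag does resolve: naturality of the counit $j_{\ast}j^{\times}\to\id$ applied to the rotated fiber sequence $(j_{\ast}j^{\ast}M)[-1]\to i_{\vee}i^{\vee}M\to M$ furnishes exactly the compatibility you need, naturally in $N$. So the argument goes through.

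That said, the paper's route is shorter and sidesteps the coherence issue entirely. Rather than applying $j^{\times}$ to the fiber sequence $i_{\vee}i^{\vee}\to\id\to j_{\ast}j^{\ast}$ and then coming back via the defining cofiber sequence for $i_{\wedge}i^{\wedge}$, the paper simply applies the exact functor $i^{\wedge}$ directly to that fiber sequence at $i_{\wedge}N$ and uses the vanishing $i^{\wedge}j_{\ast}=0$ to kill the last term. This exhibits the canonical map $i^{\wedge}i_{\vee}i^{\vee}i_{\wedge}\to i^{\wedge}i_{\wedge}\simeq\id$---which is the counit of the adjunction $i^{\wedge}i_{\vee}\dashv i^{\vee}i_{\wedge}$---as an equivalence in one stroke, with no identifications to match up. Your detour through $j^{\times}$ recovers the same equivalence but at the cost of the extra bookkeeping you anticipated; the moral is that the vanishing $i^{\wedge}j_{\ast}=0$ is the more efficient orthogonality to invoke here than $j^{\times}i_{\wedge}=0$.
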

This proposition is an extreme abstraction of prior results, such as those of \cite{DG02}, giving equivalences between categories of complete objects and categories of torsion objects.
\begin{proof}
	Let's show that the counit map 
	\[ \eta\colon i^{\wedge}i_{\vee}i^{\vee}i_{\wedge} \to \id \]
	is an equivalence; the other side will of course be dual. The counit factors as
	\[ i^{\wedge}i_{\vee}i^{\vee}i_{\wedge} \os{\eta_0}\longrightarrow i^{\wedge}i_{\wedge} \os{\eta_1}\longrightarrow \id, \]
	but of course $\eta_1$ is an equivalence since $i_{\wedge}$ is fully faithful. But $\eta_0$ fits into a cofiber sequence
	\[ i^{\wedge}i_{\vee}i^{\vee}i_{\wedge} \os{\eta_0}\longrightarrow i^{\wedge}i_{\wedge} \to i^{\wedge}j_{\ast}j^{\ast}i_{\wedge}, \]
	and the final term is zero since $i^{\wedge} j_{\ast} = 0$.
\end{proof}

Finally, we give a useful criterion for when a morphism of recollements gives rise to an equivalence, the proof of which is unfortunately a little more technical than the foregoing.
\begin{prop} \label{prop:recequ}
	Let $\mb{X}$ and $\mb{X}'$ be stable $\iy$-categories with reflective, coreflective subcategories $\mb{U}\subseteq\mb{X}$ and $\mb{U}'\subseteq\mb{X}'$ and ancillary subcategories
	\[\mb{Z}^{\vee}\subseteq\mb{X},\ \mb{Z}^{\wedge}\subseteq\mb{X},\ (\mb{Z}')^{\vee}\subseteq\mb{X}',\ (\mb{Z}')^{\wedge}\subseteq\mb{X}'.\]
	Suppose $F \colon\mb{X} \to \mb{Y}$ is a functor with
	\[F(\mb{U}) \inc \mb{U}',\ F(\mb{Z}^{\wedge}) \inc (\mb{Z}')^{\wedge},\ F(\mb{Z}^{\vee}) \inc (\mb{Z}')^{\vee}. \]
	Suppose moreover that $F|_{\mb{U}}$ and at least one of $F|_{\mb{Z}^{\wedge}}$ and $F|_{\mb{Z}^{\vee}}$ is an equivalence. Then $F$ is an equivalence.
\end{prop}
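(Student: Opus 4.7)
The plan is, assuming implicitly that $F$ is exact, first to reduce to the case where $F|_{\mb{Z}^{\wedge}}$ is the given equivalence, then to show that $F$ preserves the fracture square of $M$, and finally to deduce full faithfulness and essential surjectivity from this decomposition. The reduction uses the preceding proposition, which furnishes inverse equivalences $i^{\wedge}i_{\vee}\colon\mb{Z}^{\vee}\to\mb{Z}^{\wedge}$ in $\mb{X}$ and $(i')^{\wedge}i'_{\vee}\colon (\mb{Z}')^{\vee}\to(\mb{Z}')^{\wedge}$ in $\mb{X}'$; from $F(\mb{Z}^{\vee})\inc(\mb{Z}')^{\vee}$ one has $F\circ i_{\vee}\simeq i'_{\vee}\circ F|_{\mb{Z}^{\vee}}$, and combined with the identification $F\circ i_{\wedge}i^{\wedge}\simeq i'_{\wedge}(i')^{\wedge}\circ F$ established below, a short chase gives $F|_{\mb{Z}^{\wedge}}\circ(i^{\wedge}i_{\vee})\simeq((i')^{\wedge}i'_{\vee})\circ F|_{\mb{Z}^{\vee}}$, so one restriction is an equivalence if and only if the other is. Henceforth I assume $F|_{\mb{Z}^{\wedge}}$ is an equivalence.

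The crucial step---and, I expect, the main obstacle---is to show that $F$ sends the fracture square of $M$ to that of $FM$. Applying exact $F$ to the cofiber sequence $j_{\ast}j^{\times}M\to M\to i_{\wedge}i^{\wedge}M$, the fiber $Fj_{\ast}j^{\times}M$ lies in $\mb{U}'$ and the cofiber $Fi_{\wedge}i^{\wedge}M$ lies in $(\mb{Z}')^{\wedge}$, so the universal property of the $(\mb{Z}')^{\wedge}$-localization identifies $Fi_{\wedge}i^{\wedge}M\simeq i'_{\wedge}(i')^{\wedge}FM$ naturally in $M$. Dually, the fiber of $M\to j_{\ast}j^{\ast}M$ lies in $\mb{Z}^{\vee}$ (since the kernel of $j^{\ast}$ is the left orthogonal of $\mb{U}$), so by hypothesis $F$ sends it to $(\mb{Z}')^{\vee}$, and the universal property of the $(j')^{\ast}$-localization then gives $Fj_{\ast}j^{\ast}M\simeq j'_{\ast}(j')^{\ast}FM$; composing also yields an identification with $j'_{\ast}(j')^{\ast}i'_{\wedge}(i')^{\wedge}FM$ at the fourth vertex. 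This is precisely where the hypothesis $F(\mb{Z}^{\vee})\inc(\mb{Z}')^{\vee}$ earns its keep, which is slightly surprising given that $\mb{Z}^{\vee}$ does not overtly appear in the recollement of $\mb{X}$ by $\mb{U}$ and $\mb{Z}^{\wedge}$.

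The conclusion is then formal. For full faithfulness, the fracture square of $N$ yields
\[ \Map_{\mb{X}}(M,N)\simeq\Map_{\mb{U}}(j^{\ast}M, j^{\ast}N)\us{\Map_{\mb{U}}(j^{\ast}M, j^{\ast}i_{\wedge}i^{\wedge}N)}{\X}\Map_{\mb{Z}^{\wedge}}(i^{\wedge}M, i^{\wedge}N), \]
and similarly for $\Map_{\mb{X}'}(FM,FN)$; the equivalences $F|_{\mb{U}}$ and $F|_{\mb{Z}^{\wedge}}$, combined with the compatibilities of the previous paragraph, induce an equivalence on each vertex of the pullback, hence on the mapping space. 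For essential surjectivity, given $M'\in\mb{X}'$, I would lift $(j')^{\ast}M'$ and $(i')^{\wedge}M'$ to $N\in\mb{U}$ and $P\in\mb{Z}^{\wedge}$ along $F|_{\mb{U}}$ and $F|_{\mb{Z}^{\wedge}}$, lift the gluing map $(j')^{\ast}M'\to(j')^{\ast}i'_{\wedge}(i')^{\wedge}M'$ to $N\to j^{\ast}i_{\wedge}P$ via the induced equivalence on mapping spaces, and form $M=j_{\ast}N\X_{j_{\ast}j^{\ast}i_{\wedge}P}i_{\wedge}P$ in $\mb{X}$; applying exact $F$ recovers the fracture square of $M'$, witnessing $FM\simeq M'$.
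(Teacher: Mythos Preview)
Your argument is correct, but it follows a genuinely different route from the paper's. The paper first proves an auxiliary structural lemma: the cofiber functor
\[
k\colon \mb{Z}^{\wedge}\downarrow_{\mb{X}}\mb{U}\longrightarrow\mb{X}
\]
is an equivalence, and equivalently $\mb{X}$ is the $\infty$-category of sections of a cocartesian fibration $p\colon\mb{C}\to\Delta^1$ with fibers $\mb{Z}^{\wedge}$ and $\mb{U}$. The functor $F$ then induces $\overline{F}\colon\mb{C}\to\mb{C}'$ over $\Delta^1$, and the problem becomes showing that $\overline{F}$ preserves cocartesian edges; this unwinds to the compatibility $F|_{\mb{U}}\,j^{\ast}\simeq (j')^{\ast}F$, which the paper deduces (exactly as you do) from $F(\mb{Z}^{\vee})\subseteq(\mb{Z}')^{\vee}$. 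By contrast, you work internally with the fracture square, extracting full faithfulness from the pullback decomposition of mapping spaces and essential surjectivity by hand-lifting the gluing data. Both arguments hinge on the same key compatibility and both tacitly require $F$ to be exact (the paper needs it so that $F$ intertwines with the cofiber equivalence $k$; you state the assumption outright). Your route is more elementary and self-contained; the paper's route costs a lemma but delivers the arrow-category description of a recollement as a dividend. For the reduction step, the paper simply invokes duality (pass to opposite categories, where $\mb{Z}^{\wedge}$ and $\mb{Z}^{\vee}$ swap roles), which is shorter than your use of the equivalence $\mb{Z}^{\vee}\simeq\mb{Z}^{\wedge}$, though yours is a pleasant alternative.
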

\begin{proof}
	Let's suppose that $F|_{\mb{Z}^{\wedge}}$ is an equivalence; once again, the other case is dual. 
	\begin{lem} Set 
	\[ \mb{Z}^{\wedge}\downarrow_{\mb{X}}\mb{U} = \mb{Z}^{\wedge} \X_{\mb{X}} \Fun(\D^1, \mb{X}) \X_{\mb{X}} \mb{U}\]
	be the $\infty$-category of morphisms in $\mb{X}$ whose source is in $\mb{Z}^{\wedge}$ and whose target is in $\mb{U}$; we claim that the functor
	\[ k \colon \mb{Z}^{\wedge}\downarrow_{\mb{X}}\mb{U} \to \mb{X} \]
	that maps a morphism to its cofiber is an equivalence. \end{lem} 
	\begin{proof} The functor $k$ is really constructed as a zigzag
		\[ \mb{Z}^{\wedge}\downarrow_{\mb{X}}\mb{U} \os{\sim} \longleftarrow \mb{E} \os{t}\longrightarrow \mb{X}, \]
		where $\mb{E}$ is the $\infty$-category of cofiber sequences $M \to N \to P$ in $\mb{X}$ for which $(M \to N) \in \mb{Z}^{\wedge}\downarrow_{\mb{X}}\mb{U}$. The leftward arrow is a trivial Kan fibration. We'd like to prove that the right hand arrow, $t$, is also a trivial Kan fibration. It's clearly a cartesian fibration, and so it suffices to show that each fiber of $t$ is a contractible Kan complex.
		
		The fiber of $t$ over $P$ is the $\infty$-category of cofiber sequences 
		\[ M \to N \to P \]
		with $M \in \mb{Z}^{\wedge}$ and $N \in \mb{U}$. Since fibers are unique, this is equivalent to the $\infty$-category of morphisms $\phi \colon N \to P$ with $N \in \mb{U}$ and $\text{fib}(\phi) \in \mb{Z}^{\wedge}$. But $\text{fib}(\phi) \in \mb{Z}^{\wedge}$ if and only if $\phi$ exhibits $N$ as the $\mb{U}$-colocalization of $P$, and such a $\phi$ exists uniquely.
	\end{proof} 
	\begin{cor}
		The $\infty$-category $\mb{X}$ is equivalent to the $\infty$-category of sections of the map
		\[p\colon \mb{C} \to \D^1 \]
		where $\mb{C}\subseteq\mb{X} \X \D^1$ is the full subcategory spanned by objects of $\mb{Z}^{\wedge} \X\{0\}$ or $\mb{U} \X \{1\}.$ \qedhere
	\end{cor}
	Observe here that $p$ is a cocartesian fibration, and the cocartesian edges correspond to morphisms $f\colon M \to N$ in $\mb{X}$ which exhibit $N$ as the $\mb{U}$ localization of $M$.
	
	Now we finish the proof of Pr. \ref{prop:recequ}. In fact, $ F\colon \mb{X} \to \mb{X}'$ induces a functor over $\D^1$
	\[\overline{F} \colon \mb{C} \to \mb{C}', \]
	where $\mb{C}'\subseteq\mb{X}' \X \D^1$ is the full subcategory spanned by objects of $(\mb{Z}')^{\wedge} \X\{0\}$ or $\mb{U}' \X \{1\}$. By hypothesis, $\overline{F}$ induces equivalences on the fibers over $\{0\}$ and $\{1\}$. If $\overline{F}$ moreover preserves cocartesian edges, we'll be able to conclude that $\overline{F}$ is an equivalence of $\infty$-categories, inducing an equivalence on $\infty$-categories of sections, whence the result.
	
	The claim that $\overline{F}$ preserves cocartesian edges is equivalent to the claim that the naturally lax-commutative square
	\[ \begin{tikzcd} \mb{Z}^{\wedge} \ar{r}{j^{\ast}i_{\wedge}} \ar{d}[left]{F|_{\mb{Z}^{\wedge}}} & \mb{U} \ar{d}{F|_{\mb{U}}} \\
	(\mb{Z}')^{\wedge} \ar{r}[below]{(j')^{\ast}(i')^{\wedge}} & \mb{U}' \end{tikzcd} \]
	is in fact commutative up to equivalence. In fact, the stronger claim that the lax-commutative square
	\[ \begin{tikzcd} \mb{X} \ar{r}{j^{\ast}} \ar{d}[left]{F} & \mb{U} \ar{d}{F|_{\mb{U}}} \\
	\mb{X}' \ar{r}[below]{(j')^{\ast}} & \mb{U}' \end{tikzcd} \]
	commutes up to equivalence is equivalent to the claim that $F$ takes $j^{\ast}$-equivalences to $(j')^{\ast}$-equivalences. But this is the case if and only if $F$ takes left orthogonal objects to $\mb{U}$ -- that is, objects of $\mb{Z}^{\vee}$ -- to left orthogonal objects to $\mb{U}'$ -- that is, objects of $(\mb{Z}')^{\vee}$. Since this was one of our hypotheses, the proof is complete.\qedhere
\end{proof}

\bibliographystyle{plain}
\bibliography{mybib}
\end{document}